\newcommand{\R}{\mathbb{R}}
\newcommand{\C}{\mathbb{C}}
\DeclareMathOperator{\Tr}{Tr}
\DeclareMathOperator{\diag}{diag}
\newenvironment{thm}[2][Theorem]{\begin{trivlist}
\item[\hskip \labelsep {\bfseries #1}\hskip \labelsep {\bfseries #2.}]}{\end{trivlist}}
\newenvironment{definition}[2][Definition]{\begin{trivlist}
\item[\hskip \labelsep {\bfseries #1}\hskip \labelsep {\bfseries #2.}]}{\end{trivlist}}
\title{Uniqueness of the Gaussian Orthogonal Ensemble}
\author{José Ángel Sánchez Gómez\footnote{Undergraduate student. Universidad de Guanajuato. jose.sanchez@cimat.mx}, Victor Amaya Carvajal\footnote{Undergraduate student. Universidad de Guanajuato. victor.amaya@cimat.mx}.}
\date{December, 2017. }
\begin{document}
\maketitle

\begin{abstract}
A known result in random matrix theory states the following: Given a random Wigner matrix $X$ which belongs to the Gaussian Orthogonal Ensemble (GOE), then such matrix $X$ has an invariant distribution under orthogonal conjugations.  The goal of this work is to prove the converse, that is, if $X$ is a symmetric random matrix such that it is invariant under orthogonal conjugations, then such matrix $X$ belongs to the GOE. We will prove this using some elementary properties of the characteristic function of random variables.
\end{abstract}

\section{Introduction}

We will prove one of the main characterization of the matrices that belong to the Gaussian Orthogonal Ensemble.  This work was done as a final project for the Random Matrices graduate course at the Centro de Investigación en Matemáticas, A.C. (CIMAT), M\'exico.

\section{Background I}
We will denote by $ \mathcal{M}_{n \times m} (\mathbb{F} ) $, the set of matrices of size $n \times m$ over the field $\mathbb{F}$. 
The matrix $I_{d \times d}$ will denote the identity matrix of the set $ \mathcal{M}_{d \times d} (\mathbb{F} )$. 

\begin{definition}{1}
Denote by $ \mathcal{O}(n)$ the set of orthogonal matrices of $\mathcal{M}_{n \times n} (\mathbb{R} )$. This means, if $O \in \mathcal{O}(n)$ then $O^{\intercal}O = O O^{\intercal}  = I_{n \times n}$. 
\end{definition}

\begin{definition}{2}
A symmetric matrix $X$ is a square matrix such that $X^{\intercal} = X$.  We will denote the set of symmetric $n \times n$ matrices by $\mathbb{S}_n$.
\end{definition}

\begin{definition}{3} \textbf{[Gaussian Orthogonal Ensemble (GOE)].} We say that a symmetric real matrix $B_n \in \mathcal{M}_{n \times n}(\R)$ is in the Gaussian Orthogonal Ensemble if, $B_n(j,k)$, with $1 \leq j < k \leq n$ are independent random random variables 
\[
B_n \coloneqq
\begin{pmatrix}
B_n(1,1) &\ldots& B_n(1,n) \\
\vdots &\ldots&  \vdots \\
B_n(n,1) &\ldots& B_n(n,n) 
\end{pmatrix}
\]
and 
\begin{equation*}
\begin{aligned}
B_(j,k) = B_n(k,j), \quad
B_n(j,k) \sim \mathcal{N}(0,1),\,  j \neq k,\quad
B_{n}(j,j) \sim \mathcal{N}(0,2).
\end{aligned}
\end{equation*}
\end{definition}

\begin{definition}{4}
Denote by $\text{GOE}(n)$ the set of $n \times n$ matrices that belong to the Gaussian Orthogonal Ensemble.
\end{definition}

\begin{definition}{5}
Let $X \in \mathcal{M}_{n \times n}(\R)$ be a random matrix. We say that $X$ has invariant distribution (or just invariant) under orthogonal conjugations if for every non-random orthogonal matrix $O \in \mathcal{O}(n)$, we have that $O X O^{\intercal} \overset{\mathcal{L}}{=}X.$
\end{definition}

\noindent \textbf{Observation 1.} It follows from the definition that, if we take $Z \in \mathcal{M}_{n \times n}(\R)$ whose entries are independent random variables $z_{i,j} \sim \mathcal{N}(0,1)$, then\[
X = \frac{1}{\sqrt{2}} (Z + Z^{\intercal}) \in \text{GOE}(n).
\]

\noindent \textbf{Observation 2.} Note that, given any $B_n \in \text{GOE}(n)$ there exists a matrix $Z = [z_{i,j}]_{i,j=1}^n$, with $z_{i,j} \sim \mathcal{N}(0,1)$, such that $B_n \overset{\mathcal{L}}{=} \frac{1}{\sqrt{2}}(Z+Z^{\intercal}).$

\vspace{0.1cm}
Now, lets state the theorem whose converge we would like to prove.
\begin{thm}{2.1} ([Invariance under conjurations]) 
For any given $O \in \mathcal{O}(n)$ (non-random) and $B_n \in \text{GOE}(n)$, then \[
O X O^{\intercal} \overset{\mathcal{L}}{=}X.
\]
\end{thm}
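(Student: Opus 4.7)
The plan is to reduce the claim to an invariance statement about a matrix with i.i.d.\ standard Gaussian entries, which is much easier to analyze. By Observation 2, I can write $B_n \overset{\mathcal{L}}{=} \frac{1}{\sqrt{2}}(Z+Z^{\intercal})$ where $Z=[z_{ij}]$ has i.i.d.\ $\mathcal{N}(0,1)$ entries. Conjugating by $O$ commutes with transposition, so
\[
O B_n O^{\intercal} \overset{\mathcal{L}}{=} \tfrac{1}{\sqrt{2}}\bigl(OZO^{\intercal} + (OZO^{\intercal})^{\intercal}\bigr).
\]
Hence it is enough to prove the key lemma: $W := OZO^{\intercal} \overset{\mathcal{L}}{=} Z$. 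Once this is established, the right-hand side above has the same law as $\tfrac{1}{\sqrt{2}}(Z+Z^{\intercal})$, which is again $B_n$ in distribution, and the theorem follows.

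To prove $W \overset{\mathcal{L}}{=} Z$, I would argue as follows. Each entry $W_{ij}=\sum_{k,l}O_{ik}O_{jl}z_{kl}$ is a linear combination of independent centered Gaussians, so the family $(W_{ij})_{i,j}$ is jointly Gaussian with mean zero. Since a centered multivariate Gaussian is determined by its covariance, it remains to verify that $\mathbb{E}[W_{ij}W_{pq}] = \delta_{ip}\delta_{jq}$. A direct expansion together with $\mathbb{E}[z_{kl}z_{rs}]=\delta_{kr}\delta_{ls}$ reduces this to $\sum_{k}O_{ik}O_{pk}\cdot\sum_{l}O_{jl}O_{ql}$, and the two factors equal $\delta_{ip}$ and $\delta_{jq}$ respectively by $OO^{\intercal}=I$. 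Thus $W$ has exactly the same mean-zero, identity-covariance Gaussian law as $Z$.

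The main obstacle, if any, is purely notational: the index bookkeeping in the covariance calculation must be handled carefully, because two independent orthogonality relations of $O$ are used, one for each factor. A slicker but equivalent route is to observe that $\operatorname{vec}(OZO^{\intercal}) = (O\otimes O)\operatorname{vec}(Z)$ and that $O\otimes O$ is orthogonal whenever $O$ is, so $\operatorname{vec}(W)$ has the same $\mathcal{N}(0,I_{n^{2}})$ distribution as $\operatorname{vec}(Z)$; I would include this as a brief remark. No characteristic-function machinery is needed for this direction — the abstract's use of characteristic functions is reserved for the converse theorem, which is the genuinely harder part of the paper.
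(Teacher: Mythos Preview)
Your argument is correct and follows the same strategy as the paper: use Observation~2 to write $B_n \overset{\mathcal{L}}{=} \tfrac{1}{\sqrt{2}}(Z+Z^{\intercal})$ and reduce the claim to orthogonal invariance of the i.i.d.\ Gaussian matrix $Z$. The only differences are cosmetic: the paper reaches $\tfrac{1}{\sqrt{2}}(Z+Z^{\intercal})$ in two steps (first peeling off the right $O^{\intercal}$, then the left $O$) and leaves the underlying invariance of $Z$ unproved, whereas you do it in one step via $OZO^{\intercal}\overset{\mathcal{L}}{=}Z$ and actually supply the covariance computation (and the $O\otimes O$ remark) that justifies it.
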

\begin{proof}
By the observation 2, we can write matrix $B_n \overset{\mathcal{L}}{=} \frac{1}{\sqrt{2}}(Z+Z^{\intercal})$, with $Z$ such that $Z_{i,j} \sim \mathcal{N}(0,1)$. Now, observe that,
\[
O B_n O^{\intercal} \overset{\mathcal{L}}{=}
\frac{1}{\sqrt{2}} (OZO^{\intercal}+OZ^{\intercal}O^{\intercal}) \overset{\mathcal{L}}{=} 
\frac{1}{\sqrt{2}} (OZ+Z^{\intercal}O^{\intercal}) \overset{\mathcal{L}}{=} 
\frac{1}{\sqrt{2}} (Z+Z^{\intercal}) \overset{\mathcal{L}}{=} B_n
\]
\end{proof}

\section{Background II}

We define the characteristic function of a random variable $R$ to be a function $\varphi_R:\R\rightarrow \C$ given by,
$$ \varphi_R(t)= \mathbb{E}\left[ e^{i tR}\right],\qquad \forall t\in \R.$$

In particular, given a random variable $R \sim \mathcal{N}(\mu, \sigma^2)$, its characteristic function is given by: $$ \varphi_{R}(t) =  \mathbb{E} [e^{itR}] = e^{i\mu t - \frac{1}{2} \sigma^2 t^2}.$$

\noindent  Another important thing to remember is that if two random variables $R_1, R_2$ are such that their characteristic function coincide in every point, then their distributions are the same. In other words, if 
$\varphi_{X}(t) =  \varphi_{Y}(t)$, for every $t \in \mathbb{R}$, then $X \sim Y$. 

\bigskip
Given a symmetric random matrix $X\in \mathbb{S}_d$ its characteristic function is defined by:
$$ C_X: \mathbb{S}_d\rightarrow \C$$
$$ C_{X} (M) = \mathbb{E} \left[ \exp\left\{i \Tr(X^{\intercal}M) \right\} \right], \qquad \forall M \in \mathbb{S}_d.$$

\noindent Where $\Tr(\cdot)$ is the trace operator.  Recall that the trace is invariant under cyclic permutations. From this, if $A\in \mathcal{M}_{d\times d}(\R)$ and for all $M \in \mathbb{S}_d$.
\begin{eqnarray*}
C_X(A^{\intercal}MA) &=& \mathbb{E}\left[\exp\left\{ i\Tr(X^{\intercal} A^{\intercal} M A) \right\} \right] \\
&=& \mathbb{E}\left[\exp\left\{i\Tr(A^{\intercal}X^{\intercal}A  M) \right\} \right] \\
&=&  \mathbb{E}\left[\exp\left\{i\Tr([A^{\intercal}X^{\intercal}A]^{\intercal}  M) \right\} \right]= C_{A^\intercal X A}(M).
\end{eqnarray*}
$$	$$
Notice that, if $A,B\in \mathbb{S}_d$, 
$$ \Tr(A^{\intercal} B)= \sum_{j=1}^d (A^{\intercal}B)_{jj}= \sum_{j,k=1}^d (A^\intercal)_{jk}B_{kj}= \sum_{j,k=1}^d A_{kj}B_{kj}. $$
Furthermore, if $A,B\in\mathbb{S}_d$,
$$ \Tr(A^{\intercal} B)= \sum_{j=1}^d A_{jj}B_{jj} + 2\sum_{1\leq j< k\leq d} A_{jk}B_{jk}.$$
From this, note that
$$ C_X(M)= \mathbb{E}\left[ \exp\left\{  i\Tr(X^{\intercal} M)  \right\} \right]=  \mathbb{E}\left[ \exp\left\{   i\sum_{j=1}^d M_{jj}X_{jj}+ 2 i\sum_{1\leq j< k\leq d} M_{kj}X_{kj}\right\} \right].$$
In particular, if the entries of the matrix $X$ are independent, the characteristic function of $X$ can be written as the product of characteristic functions of each entry, i.e.,
\begin{eqnarray*}
	C_X(M)&=&  \prod_{j=1}^d \mathbb{E}\left[ \exp\left\{ iM_{jj}X_{jj}\right\} \right] \cdot \prod_{1\leq j<k\leq d}\mathbb{E}\left[ \exp\left\{ 2iM_{kj}X_{kj}\right\} \right].
\end{eqnarray*}
It will be useful to keep this formula in mind when computing the value of the characteristic function on particular symmetric matrices. 

\section{The problem}

Our aim is to prove the following theorem using nothing more than the characteristic function of a random matrix.

\begin{thm}{4.1}
Let $X$ be a non-zero random symmetric $d\times d$ matrix such that it is invariant under orthogonal conjugations. Suppose that all the entries of $X$ are independent with finite variance. Then, there exist a matrix belonging to the Gaussian Orthogonal Ensemble $Y$, and real numbers $\mu\in\R$, $\sigma^2\geq0$ such that $X \overset{\mathcal{L}}{=} \mu I_{d \times d}+\sigma^2 Y $. 
\end{thm}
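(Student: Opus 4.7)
The plan is to match two expressions for the matrix characteristic function $C_X$. First, since every permutation matrix is orthogonal, invariance of $X$ forces all diagonal entries to share a common distribution (with characteristic function $\varphi_d$) and all strictly upper-triangular entries to share a common distribution (with characteristic function $\varphi_o$). Combined with independence, the formula for $C_X$ derived earlier becomes
\[
C_X(M) = \prod_{j=1}^{d} \varphi_d(M_{jj}) \prod_{1\le j<k\le d} \varphi_o(2 M_{jk}), \qquad M\in\mathbb{S}_d.
\]
On the other hand, orthogonal invariance combined with the spectral theorem makes $C_X(M)$ a symmetric function of the eigenvalues of $M$: choosing $O$ that diagonalizes $M$ gives $C_X(M)=\prod_j \varphi_d(\lambda_j(M))$.

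The decisive step is to evaluate both expressions on a well-chosen family of test matrices. I would take $M$ with $M_{11}=M_{22}=a$, $M_{12}=M_{21}=b$, and all other entries equal to $0$; the non-trivial $2\times 2$ block has eigenvalues $a\pm b$, so the two formulas produce the functional equation
\[
\varphi_d(a)^2\,\varphi_o(2b) \;=\; \varphi_d(a+b)\,\varphi_d(a-b),\qquad a,b\in\R.
\]
Setting $a=0$ yields $\varphi_o(2b)=\varphi_d(b)\varphi_d(-b)=|\varphi_d(b)|^2$, so $\varphi_o$ is a real, non-negative function, fully determined by $\varphi_d$, and the law of $X_{12}$ is symmetric about $0$.

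The finite-variance assumption makes $\varphi_d$ and $\varphi_o$ of class $C^2$ and non-vanishing on some interval $(-\delta,\delta)$ of the origin, where the logarithms $\psi_d=\log\varphi_d$ and $\psi_o=\log\varphi_o$ are well defined. Rewriting the functional equation as
\[
2\psi_d(a) + \psi_o(2b) = \psi_d(a+b) + \psi_d(a-b),
\]
and differentiating twice in $b$ at $b=0$, one obtains $\psi_d''(a)=2\psi_o''(0)$, constant in $a$. Integrating with $\psi_d(0)=0$ and $\psi_d'(0)=i\mathbb{E}[X_{11}]=:i\mu$ yields $\psi_d(a)=i\mu a-\tfrac{1}{2}\sigma_d^2 a^2$ on $(-\delta,\delta)$, where $\sigma_d^2=\mathrm{Var}(X_{11})$; substituting into the $a=0$ specialization gives $\psi_o(t)=-\tfrac{1}{2}\sigma_o^2 t^2$ with $\sigma_o^2=\mathrm{Var}(X_{12})$, and matching the quadratic coefficients forces the GOE ratio $\sigma_d^2 = 2\sigma_o^2$.

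The remaining hurdle, which I expect to be the main technical obstacle, is promoting this \emph{local} Gaussian identification to a \emph{global} one, since $\log\varphi_d$ is not a priori defined outside a neighborhood of $0$. I would iterate the functional equation with $b=a$: it reads $\varphi_d(2a)=\varphi_d(a)^2\varphi_o(2a)$, and once the Gaussian form is known for $|a|<\delta$, direct substitution shows the right-hand side equals the Gaussian characteristic function at $2a$, so the identification extends to $(-2\delta,2\delta)$, and iterating doubles the interval each time to cover all of $\R$. With $\sigma:=\sqrt{\sigma_o^2}$, the distributional identities on the entries then assemble, via independence, into $X\stackrel{\mathcal{L}}{=}\mu I_{d\times d}+\sigma Y$ for some $Y\in\mathrm{GOE}(d)$, which is the desired decomposition (the theorem's symbol ``$\sigma^2$'' here playing the role of this non-negative constant $\sigma$).
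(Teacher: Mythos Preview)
Your argument is correct and follows a genuinely different route from the paper's. The paper also begins by using permutation matrices to equalize the diagonal and off-diagonal laws and then reduces to a $2\times2$ block, but from there it parametrizes by the rotation angle $\theta$, differentiates the identity $C_{X_2}(M)=C_{X_2}(Q_\theta^{\intercal}MQ_\theta)$ in $\theta$, and separates variables to obtain first-order ODEs for $\varphi_1$ and $\varphi_2$. You instead invoke the spectral theorem to rewrite $C_X(M)=\prod_j\varphi_d(\lambda_j(M))$ and evaluate on the single two-parameter family $M(a,b)$, which yields the d'Alembert-type functional equation $\varphi_d(a)^2\varphi_o(2b)=\varphi_d(a+b)\varphi_d(a-b)$; the Gaussian form then drops out by differentiating twice in $b$. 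Your approach is slightly more structural (it makes transparent that $C_X$ is a symmetric function of the eigenvalues and that $\varphi_o$ is determined by $\varphi_d$), and your doubling argument $\varphi_d(2a)=\varphi_d(a)^2\varphi_o(2a)$ handles the local-to-global extension more explicitly than the paper, which simply asserts that the ODE solution extends to all of $\R$. The paper's route, on the other hand, avoids the spectral-theorem step and stays entirely within explicit $2\times2$ computations, at the cost of a somewhat informal separation-of-variables step. Both arguments use only characteristic functions, finite second moments for $C^2$ regularity, and elementary calculus, and both arrive at $X_{jj}\sim\mathcal{N}(\mu,2\sigma_o^2)$, $X_{jk}\sim\mathcal{N}(0,\sigma_o^2)$; your closing remark that the theorem's ``$\sigma^2$'' is really the scale $\sigma=\sqrt{\sigma_o^2}$ is also apt.
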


\vspace{0.2cm}
\noindent \textit{Proof}:  
We will divided our proof into three steps:
\begin{itemize}
\item[1.] Show that the elements inside the diagonal share the same distribution, and that all the elements outside the main diagonal share the same distribution. 
\item[2.] Show that it is enough to prove the result for square $2 \times 2$ matrices.
\item[3.] We will prove that the characteristic functions of the elements of the matrix $X$ correspond to normally distributed random variables.
\end{itemize}

\textbf{Step one:} Let us define the matrix $A_{(k,j)}^t$ as the matrix that has the value $t$ in the positions $(k,j)$ and $(j,k)$ and zeros everywhere else, for $1\leq k < j\leq d$ and $t\in \R$. If we calculate the the characteristic function of $A_{(k,j)}^t$ we obtain:
\begin{eqnarray*}
C_X (A_{(k,j)}(t)) &=&  \mathbb{E} \left[ \exp \left\{ i\Tr((A_{(k,j)}^t)^{\intercal}X) \right\} \right] \\
&=& \mathbb{E} \left[ \exp\left\{ 2itX_{kj} \right\} \right]\\
&=& \phi_{ X_{kj} } (2t).
\end{eqnarray*}
Now, let $k\neq j$ and $l\neq g$. There exists a permutation matrix $P\in \mathcal{O}(d)$ such that for all $t\in \R$, $P^{\intercal} A_{(k,j)}^t P= A_{(l,g)}^t$. From the invariance of $X$ with respect to orthogonal conjugations:

$$ \varphi_{X_{kj}}(2t)= C_X(A_{(k,j)}^t)= C_X(P^{\intercal} A_{(k,j)}^t P)=C_X(A_{(l,g)}^t)= \varphi_{X_{lg}}(2t),\qquad \forall t\in \R.$$ 
It follows that $X_{kj}\sim X_{lg}$. It means that all entries outside the main diagonal have the same distribution. In an analogous way, by conjugating we can prove that if $1\leq j< k \leq d$, $X_{kj}\sim -X_{kj}$. This implies the distribution of the entries outside of the main diagonal is symmetric. This is because all entries of the matrix have finite second moment, $\mathbb{E}(X_{kj})=0$. Now, let $A_i^t= \diag(t \cdot e_j)$, i.e. a matrix that has as only non-zero entry $t$ in the position $(j,j)$, for $1\leq j\leq d$ and $t\in \R$. From this,
\begin{eqnarray*}
C_X(A_j^t)&=& \mathbb{E}\left[ \exp\left\{i \Tr((A_j^t)^{\intercal}X) \right\}\right] \\
&=& \mathbb{E} \left[ \exp \left\{ itX_{jj} \right\} \right]\\
&=& \varphi_{ X_{jj} }(t),
\end{eqnarray*}
for all $1\leq j\leq d$ and $t\in \R$. Now, given $j\neq k$, there exists a permutation matrix $Q\in \mathcal{O}(d)$ such that $Q^{\intercal} A_j^t Q= A_k^t$.From this,
$$ \varphi_{X_{jj}}(t)= C_X(A_j^t)= C_X(Q^{\intercal}A_j^tQ )= C_X(A_k^t)= \varphi_{X_{kk}}(t) \qquad t\in \R.$$
Thus $X_{jj}\sim X_{kk}$ for all $j\neq k$. Then, all entries in the diagonal have the same distribution.
\\

\noindent \textbf{Step two:} In the last step, we have proved that all entries in the main diagonal have the same distribution, and the entries outside the diagonal share the same distribution. From this, it is possible to find the distribution of all entries of the matrix $X$ by finding the distribution of the entries of the sub-matrix,
$$ X_2= \begin{pmatrix}
	X_{11} & X_{12} \\
    X_{12} & X_{22} \\
\end{pmatrix}.$$

It can be proved that $X_2$ is invariant with respect to rotations in $\R^2$. For this, let $\theta\in \R$ and,

$$ Q'_\theta= \begin{pmatrix}
	M_\theta & 0 \\
    0  & I_{(d-2)\times (d-2)} \\
\end{pmatrix}\in \mathcal{M}_{d\times d}(\R),$$
where,
$$Q_{\theta}= \begin{pmatrix}
	\cos(\theta) & -\sin(\theta) \\
    \sin(\theta) & \cos(\theta).\\
\end{pmatrix} \in \mathcal{M}_{2\times 2}(\R).$$
The matrix $Q'_\theta$ is an orthogonal matrix that rotates the first two coordinates and keep fixed the others. Let $M\in\mathcal{S}_2$, where
$$M= \begin{pmatrix}
	a & b\\
    b & d\\
\end{pmatrix}\in \mathbb{S}_2. $$
We can embed $M$ into $\mathbb{S}_d$ by filling all entries outside the $2\times 2$ first sub-matrix with zeros:
$$ M'= \begin{pmatrix}
	M & 0 \\
    0 & 0 \\
\end{pmatrix} \in \mathcal{S}_d.$$

Now lets observe that,

$$ C_X(M')= \mathbb{E}\left[ \exp\left\{ i\Tr(M^{\intercal}X) \right\} \right]= \mathbb{E}\left[  \exp\left\{ i(aX_{11}+2bX_{12}+ dX_{22})  \right\} \right]= C_{X_2}(M).$$

By evaluating the characteristic function of $X$ on $M'$ and using the orthogonal in-variance, and considering that all entries outside the $2\times 2$ matrix in $M'$ are zero,

$$ C_{X_2}(M)= C_X(M')= C_X((Q'_\theta)^{\intercal} M' Q'_\theta) = C_{X_2}(Q^{\intercal}_\theta M Q_\theta) .$$


\noindent \textbf{Step three:} Since we have reduced the problem to the case of $2 \times 2$ matrices, we will keep $M$ and $Q_\theta$ to be the matrices defined in the previous step. We take $M_{\theta} = Q_{\theta} ^ {\intercal} M Q_{\theta}$, a $2 \times 2$ symmetric matrix. Name the entries of $M_{\theta}$ as follow
 $$  M_{\theta} =\begin{pmatrix}
   A & B \\
   B & D \\
  \end{pmatrix} ,
  $$ 
we can compute explicitly the values of $A,B,D$ in function the entries of $M$. We get the following:

\begin{eqnarray*}
A &=& \frac{a+d}{2} + \frac{a-d}{2} \cos(2 \theta) - b \sin(2 \theta), \\
B &=& \frac{a-d}{2} \sin(2 \theta) + b \cos(2 \theta), \\
D &=& \frac{a+d}{2} - \frac{a-d}{2} \cos(2 \theta) + b \sin(2 \theta).
\end{eqnarray*}

These expressions are valid for every $\theta \in \mathbb{R}$. If we calculate the derivatives of the last expressions with respect to $\theta$, what we get is the following:
\begin{equation} \label{eq:1}
\frac{dA}{d \theta}  = -2B,\qquad \frac{dB}{d \theta} = A-D,\qquad \frac{dD}{d \theta} = 2B
\end{equation}

Now, since $X_2$ is invariant under rotations, we have that:
$$ C_{X_2} (M_{\theta}) = C_{X_2}(M).$$

\noindent Computing the characteristic function on both matrices we obtain that,
\begin{equation}\label{eq:2}
\varphi_1(a)\varphi_2(2b)\varphi_1(d) = \varphi_1(A)\varphi_2(2B)\varphi_1(D),
\end{equation}

\noindent where $\varphi_1$ represents the characteristic function of the random variable $X_{11}$ (which is the same of $X_{22}$ since the have the same distribution). And $\varphi_2$ is the characteristic function of $X_{12}$.

Since entries of $X_2$ have finite variance $\varphi_1$ and $\varphi_2$ are at least two times derivable at zero and $\phi_j'(0)= i\mathbb{E}(X_{1j})$ for $j=1,2$. We can derive equation (\ref{eq:1}) with respect to $\theta$, we get:

\begin{equation} \label{eq:3}
0 = \varphi_1'(A) \varphi_2(2B) \varphi_1(D) \frac{dA}{d \theta} +
\varphi_1(A) \varphi_2'(2B) \varphi_1(D) 2 \frac{dB}{d \theta} +
\varphi_1(A) \varphi_2(2B) \varphi_1'(D) \frac{dD}{d \theta} 
\end{equation}

\noindent Note that since $\varphi_1(0) = \varphi_2(0)=1$, and the are continuous at zero, there exist an open set around zero where these functions are not zero. So, we can rewrite equation (\ref{eq:3}) in the following way: 

\begin{equation*}
\begin{split}
0 &= \frac{\varphi_1'(A)}{\varphi_1(A)}\varphi_1(A) \varphi_2(2B) \varphi_1(D) \frac{dA}{d \theta} + \frac{\varphi_2'(2B)}{\varphi_2(2B)}\varphi_1(A) \varphi_2(2B) \varphi_1(D) 2 \frac{dB}{d \theta} + \\
 &\quad+ \frac{\varphi_1'(D)}{\varphi_1(D)}\varphi_1(A) \varphi_2(2B) \varphi_1(D) \frac{dD}{d \theta}.
\end{split}
\end{equation*}

Now, replacing the values we found in (\ref{eq:1}) and dividing by $-B(A-D)$ (for those values $B \neq 0, A \neq D$),  we get that 

\begin{equation} \label{eq:4}
\frac{1}{B} \frac{\varphi_2'(2B)}{\varphi_2(2B)}  =  \frac{1}{A-D} \left[ \frac{\varphi_1'(A)}{\varphi_1(A)}  - \frac{\varphi_1'(D)}{\varphi_1(D)} \right]
\end{equation}

We observed that the left hand side of (\ref{eq:4}) only depends on $B$, whereas the right hand side depends in both $A$ and $D$. By this observation we can conclude that it has to be constant. Then, there exists $k \in \mathbb{R}$ such that:
$$  \frac{1}{B} \frac{\varphi_2'(2B)}{\varphi_2(2B)}  = -k,$$

Observe that, since $\varphi_2$ is the characteristic function of a symmetric random variable, the co-domain of $\varphi_2$ is contained in $\R$. This implies $k\in\R$. Now, lets take $x = 2B$. We have that $\varphi_2'(x) = -\frac{xk}{2} \varphi(x)$.  This is an easy real-valued ODE whose solution is given by $\varphi_2(x) = B_0 e^{- \frac{kx^2}{2}}$, for some $B_0 \in \mathbb{R}$. Now, as $\varphi_2$ is a characteristic function it is true that $\varphi_2(0)=1$. This give us as a result that $B_0=1$. So, we have that \[
\varphi_2(x) =  \exp(
-\frac{k}{2}\frac{x^2}{2}). \]
At first, we have this is true at a neighborhood of $0\in \R$, but we can extend this solution on $\R$. Observe that if $k< 0$ then $|\varphi_2(t)|\rightarrow \infty$ when $|t|\rightarrow \infty$. This leads to a contradiction since characteristic functions have bounded image. Then, if $k >0$ and $\varphi_2$ is the characteristic function of a normal random variable with parameters $\mathcal{N}(0, k/2)$. Given $k=0$, $\varphi_2$ is the characteristic function of a singular distribution at $0$. 

On the other hand, taking $D=0$ in equation (\ref{eq:4}), it follows that
$$ \frac{1}{A} \left[ \frac{\varphi_1'(A)}{\varphi_1(A)}  - \frac{\varphi_1'(0)}{\varphi_1(0)} \right].$$

\noindent We know that $\varphi_1(0)=1$ and that $\varphi'(0)= i \mu$, where $\mu= \mathbb{E}(X_{11})$. So,

$$ \frac{\varphi_1'(A)}{\varphi(A)} = -Ak + i \mu \qquad \Rightarrow \qquad \varphi_1'(A) = (-Ak + i \mu) \varphi_1(A).$$

The solution to this ODE is $\varphi_1(x) = e^{i \mu x - \frac{kx^2}{2}}$. which is nothing more than the characteristic function of a random variable with distribution $\mathcal{N}(\mu,k).$

Since $k$ is non-negative we can write it as $k = 2 \sigma^2$. So we can write that characteristic functions in the following way:
$$ \varphi_1(x) =	e^{i \mu x -  \sigma^2 x^2},\qquad \varphi_2(x) = e^{- \sigma^2 \frac{x^2}{2}}.
$$

\noindent which means that $X_{11} \sim \mathcal{N}(\mu,2 \sigma^2)$ and $X_{12} \sim \mathcal{N}(0, \sigma^2)$. \\

In summary, we have proved that $X_{jj}\sim \mathcal{N}(\mu, 2\sigma^2)$ and $X_{kj}\sim \mathcal{N}(0,\sigma^2)$. Now, if the take a GOE matrix $Y$, i.e., $Y_{ii} \sim \mathcal{N}(0,2)$ and $Y_{jk} \sim \mathcal{N}(0, 1)$, for every $j \neq k$, we conclude that:

$$ X \overset{\mathcal{L}}{=} \sigma^2 Y + \mu I_d.$$

We see that X is clearly invariant under orthogonal conjugations.  \hfill $\Box$

\section{Conclusions}
There are important remarks that are worth stressing out about this proof. First, we find interesting that it unfolded by using  elementary knowledge of basic properties of characteristic functions, differential equations and linear algebra. It is a simple proof for this result.

Second, it states a generalization of the standard result. This result is proved usually for matrices that have zero-mean entries. This proof allow us to study general matrices that are invariant with respect to orthogonal conjugations.

The independence of the entries is a key requirement for this proof. One example of a distribution that is invariant under orthogonal conjugation is the Haar distribution on $\mathcal{O}(d)$. This distribution does not hold the entry-wise independence since orthogonal matrices satisfy relations between them. In terms of this work, it is  important to write the characteristic function of the matrix in terms of the product of the characteristic function of each entry.

One possible extension of this project is to find the distribution of Wishart matrices by applying the same characteristic-function approach.

\end{document}